\documentclass[10pt,a4paper,reqno]{amsart}


\usepackage{amsmath,amstext,amssymb,mathrsfs,amscd,amsthm}
\usepackage{amsfonts}
\usepackage[all]{xy}
\usepackage{booktabs}
\usepackage{verbatim}
\usepackage{enumerate}
\usepackage[bookmarks=false]{hyperref}

\usepackage{graphicx}                   
\usepackage{subfig}
\usepackage{xspace}

\newtheorem{lemma}{Lemma}[section]
\newtheorem{proposition}[lemma]{Proposition}
\newtheorem*{theorem}{Theorem}
\newtheorem*{cor0}{Corollary}

\newtheorem{predf}[lemma]{Definition} 
\newenvironment{df}{\begin{predf}\rm}{\end{predf}}
\newtheorem{preremark}[lemma]{Remark}  
\newenvironment{remark}{\begin{preremark}\rm}{\end{preremark}}
\newtheorem{preremark0}[lemma]{Remark}  

\newtheorem*{prenotation}{Notation}

\numberwithin{equation}{section}

\newcommand\mnote[1]{}

\newcommand\mmnote[1]{}

\newcommand{\bN}{\mathbb{N}}

\newcommand{\bR}{\mathbb{R}}

\newcommand{\bZ}{\mathbb{Z}}

\newcommand\lra{\longrightarrow}

\newcommand\Diff{\mathrm{Diff}}
\newcommand\Emb{\mathrm{Emb}}

\newcommand\Th{\mathrm{Th}}

\newcommand{\map}{\mathrm{map}}




\renewcommand{\geq}{\geqslant}
\renewcommand{\leq}{\leqslant}

\newcommand{\cL}{\mathcal{L}}

\addtolength{\textwidth}{0cm}

\newcommand{\ppp}[1]{\pp^{#1}}
\def\bump{\nu}
\def\bumpp{\nu}

\def\fib{\mathrm{fib}}
\def\P{\Psi_d}
\def\Pr{{\widetilde{\Psi}_d}}
\def\Pg{\Pr}
\def\ls{{\widetilde{\Psi}}}
\def\gs{\Psi}
\def\ms{{\Psi(-,X)}}
\def\Pgcero{{\widetilde{\Psi}_0}}
\def\Prcero{\widetilde{\Psi}_0}

\def\pp{\Pr(\bR^n)}
\def\qq{\Pr(B^n)}

\def\L{\cL_d}
\def\Lr{\widetilde{\cL}_d}
\def\Lg{\widetilde{\cL}_d}

\def\kk{\L(\bR^n)}
\def\ll{\Lr(\bR^n)}

\def\Gr{\mathrm{Gr}}
\def\diam{\mathrm{diam}}
\def\Gauss{\mathrm{Gauss}}
\def\gauss{\mathrm{Gauss}}

\def\CL{\mathrm{CL}}

\title[The homotopy type of the space of submanifolds]{Homotopy types of spaces of submanifolds of $\bR^n$}
\author{Federico Cantero Mor\'an}
\thanks{The author was funded by Michael Weiss’ Humboldt professor grant and the Belgian Interuniversity
Attraction Pole P07/18. He was partially supported by the Spanish Ministry of Economy
and Competitiveness under grant MTM2013-42178-P.}
\email{\texttt{fcant{\_}01@uni-muenster.de}}
\address{{\normalfont IRMP, Chemin du cyclotron, 2 1348 Louvain-la-nueuve, Belgium}}
\date{}

\subjclass[2010]{54B20,55R80, 55P60, 55R25}
\keywords{Submanifolds, configuration spaces, Ran space, scanning map}

\begin{document}
\begin{abstract} We compute the homotopy type of the space of proper $d$-dimensional submanifolds of $\bR^n$ with a smooth version of the Fell topology. Our methods allow us to compute the homotopy type of the space of submanifolds with summable labels too, and to give a new proof of the Galatius--Randal-Williams theorem on the homotopy type of their space of submanifolds.
\end{abstract}
\maketitle

\section{Introduction}

In \cite{GMTW}, the classifying space of the $d$-dimensional cobordism category was found to be homotopy equivalent to a delooping of the infinite loop space associated to the Thom spectrum $\mathbf{MTO}(d)$, whose $n$-th space is the Thom space of the affine Grassmannian $\gamma_{d,n}^\perp$ of $d$-planes in $\bR^n$, seen as a vector bundle over the linear Grassmannian $\Gr_d(\bR^n)$. This was proven again with different methods by Galatius and Randal-Williams \cite{GR-W}, who introduced the space $\P(\bR^n)$ of submanifolds of $\bR^n$. A crucial step in their proof was proving that the inclusion 
\begin{equation}\label{eq:010}\Th(\gamma_{d,n}^\perp)\hookrightarrow \P(\bR^n)\end{equation}
is a weak homotopy equivalence.

$\P(\bR^n)$ is a topological space whose underlying set $\psi_d(\bR^n)$ is the subset of the power set of $\bR^n$ consisting of (possibly empty, possibly non-compact) proper $d$-submanifolds of $\bR^n$ without boundary. Here, a \emph{proper subset of $\bR^n$} is a subset of $\bR^n$ whose intersection with any compact subset is compact. A subset of $\bR^n$ is proper if and only if it is closed. 

In this paper, we study a different topology of the set $\psi_d(\bR^n)$ and, via a counterpart to \eqref{eq:010}, show that $\psi_d(\bR^n)$ with this alternative topology is weakly contractible. We also study the set $\psi_d(\bR^n;X)$ of proper $d$-dimensional submanifolds of $\bR^n$ with summable labels on an abelian topological monoid $X$. We start by defining the new topology on $\psi_d(\bR^n)$, the differential Fell topology, and explain how it relates to the topology on $\P(\bR^n)$. The definition of the topology on $\psi_d(\bR^n;X)$ is postponed until Definition \ref{df:ms}.

%

The set of closed subsets of a topological space $X$ can be endowed with several topologies, of which the Fell topology \cite{Fell} is the most convenient for us. To describe it define, for each subset $U\subset X$, 
\[U^-= \{A\in \CL(X)\mid A\cap U\neq \emptyset\}, \quad U^+=\{A\in \CL(X)\mid A\subset U\}.\]
Then the Fell topology has as subbasis the collection of all subsets $U^-$ with $U$ an open subset of $X$ and $U^+$ with $U$ the complement of a compact subset of $X$. The reader more familiar with topologies in function spaces may gain some intuition by knowing that if $X$ is metrizable, then there exists a metric $d$ on $X$ for which the function
\begin{equation}\label{eq:17}\CL(X)\lra \map(X,\bR)\end{equation}
given by $A\mapsto d(A,-)$ is an embedding if the right hand-side is endowed with the compact-open topology \cite[Theorem~2(d)]{Beer:Fell}.

The subset $\psi_d(\bR^n)\subset \CL(\bR^n)$ can be endowed with the subspace topology, but this of course does not take into account the smooth structure of the submanifolds. This is addressed by considering $\psi_d(\bR^n)$ not as a subset of $\CL(\bR^n)$, but as a subset
 \[\psi_d(\bR^n)\subset \CL(\bR^n\times \Gr_d(\bR^n)),\] 
using the Gauss map that sends a submanifold $W$ to $\{(x,T_xW)\mid x\in W\}$.

 We denote by $\pp$ the set $\psi_d(\bR^n)$ endowed with the subspace topology of the latter inclusion, and we refer to it as the \emph{space of merging submanifolds} of $\bR^n$.\label{merging}

This topology is still coarser than the topology in $\P(\bR^n)$, but it is very close to it \cite{cantero:metric}: A sequence of compact submanifolds in $\Pr(\bR^n)$ converging to a compact submanifold $W$ eventually takes values in covering spaces of $W$. If we refine the topology $\Pr(\bR^n)$ imposing these covering spaces to be single-sheeted (i.e., diffeomorphisms), then we arrive to the topology $\P(\bR^n)$ defined by Galatius and Randal-Williams. \mnote{All the topologies described so far give rise to topological sheaves over $\bR^n$.}

The following is the main theorem of this paper. 
Let $\kk$ and $\ll$ be the subspaces of $\P(\bR^n)$ and $\pp$, consisting only on (possibly empty) unions of parallel affine planes together with the empty set. Let $\Lg(\bR^n;X)$ be the subspace of $\Pg(\bR^n;X)$ of (possibly empty) unions of parallel affine planes and locally constant labels (in the notation of Definition \ref{df:ms}, $\alpha$ is locally constant).
\begin{theorem} The inclusions 
\begin{align*}
 \L(\bR^n)&\hookrightarrow \P(\bR^n)\\
 \ll&\hookrightarrow \pp\\
 \Lg(\bR^n;X)&\hookrightarrow \Pg(\bR^n;X)
\end{align*}
are part of a strong deformation retraction. In addition, the inclusion $\Th(\gamma_d^\perp(\bR^n))\hookrightarrow \L(\bR^n)$ is part of a strong deformation retraction and $\ll$ is weakly contractible.
\end{theorem}

Though the main contribution of this theorem is the computation of the homotopy type of $\pp$ and $\Pg(\bR^n;X)$, the part referring to $\P(\bR^n)$ has its own interest: Galatius and Randal-Williams showed that \eqref{eq:010} is a weak homotopy equivalence, and a formal corollary of our main theorem improves their result as follows: 

\begin{cor0} The inclusion \eqref{eq:010} is a strong deformation retraction, so in particular $\P(\bR^n)$ has the homotopy type of a CW-complex.
\end{cor0}

The assignments $\P(-)$, $\Pr(-)$ and $\Pg(-;X)$ define sheaves on the site of manifolds and open embeddings (see Section \ref{s:sheaves} and \cite{GR-W}). In \cite{R-WEmbedded}, Randal-Williams proved the remarkable property that $\P(-)$ is a microflexible sheaf. In Section \ref{s:4} we show that $\Pr(-)$ and $\Pg(-)$ are not microflexible, so the results of \cite[\S 3-6]{R-WEmbedded} on $\P(-)$ (which are based on \cite{galatius-2006}) do not generalize to $\Pr(-)$ and $\Pg(-)$.

\subsection*{Acknowledgements} This paper has benefited from many conversations with Abd\'o Roig-Maranges, as well as from comments from the anonymous referee, Oscar Randal-Williams and Martin Palmer. 
The paper by Karcher in the references was kindly pointed out to me by Igor Belegradek at mathoverflow.


\section{Spaces of submanifolds and an overview of the proof}

In this section we give a precise definition of the spaces $\P(U)$ and $\Pr(U)$ (\cite[\S 2]{GR-W}, \cite{cantero:metric}) and we give a some comments on the proof. For the sake of clarity, we will give a $C^1$-version of this topology as in \cite{Bokstedt-Madsen}. We start setting up some conventions:

We denote by $d_0$ the Euclidean distance in $\bR^n$, by $d_1$ the distance on $\Gr_d(\bR^n)$ given by
\[d_1(L,L') = \max_{v\in S(L)}\min_{w\in S(L')}\{\mathrm{angle}(v,w)\} = \max_{w\in S(L')}\min_{v\in S(L)}\{\mathrm{angle}(v,w)\},\]
where $S(L)$ is the unit sphere in $L$, and by $d = d_0+d_1$ the distance in $\bR^n\times \Gr_d(\bR^n)$. If $f\colon L\to L'$ is a linear operator, then 
\[\|f\| = \max_{v\in S(L)}\{\|f(v)\|\}.\]

If $W\in \psi(U)$, we denote by $p\colon NW\to W$ the projection, which is covered by two bundle maps: its differential $Dp\colon T(NW)\to TW$ and the canonical bundle isomorphism
\[\xymatrix{
TNW \ar[r]^-\alpha\ar[d] & TW\oplus NW\ar[d] \\
NW\ar[r]^p & W.
}\]
The differential $Dp$ is the composition of $\alpha$ with the projection onto $TW$, and we write $\tau$ for the composition of $\alpha$ with the projection onto $NW$. 

Recall that there is a function $\epsilon\colon W\to (0,\infty)$ such that the restriction
\[\exp^\epsilon_W\colon N^\epsilon W\longrightarrow U\]
 of the exponential map $\exp_W\colon NW\rightarrow U$ to the subspace $N^\epsilon W$ of vectors $v$ of length at most $\epsilon(p(v))$ is an embedding. In addition, the restriction of this map to each fibre 
\[\exp_{W,x}^\epsilon\colon N_x^\epsilon W\to U\]
is a radial isometry. We denote by $z\colon W\to NW$ the zero section, by $T^\epsilon$ the image of $N^\epsilon W$, and by $\pi$ the composition of $(\exp^{\epsilon}_W)^{-1}$ and $p$. Summarizing:
\[\xymatrix{
NW \ar @{}[r]|{\supset}\ar[d]_p & N^\epsilon W\ar[d]_{\exp_W^\epsilon}^\cong & \\
W\ar @{}[r]|{\subset} \ar @/^/[ur]^{z} &T^\epsilon\ar @{}[r]|{\subset} \ar @/^/[l]^{\pi} & U
}\]


\begin{df} The space $\P(U)$ has as underlying set the collection of all proper $d$-dimensional submanifolds of $U$, together with the empty submanifold. Its topology is given by the following neighbourhood basis of any proper submanifold $W$: every compact subset $K\subset U$ and every $\epsilon>0$ define a basic neighbourhood $(K,\epsilon)^\gs$ of $W$; a submanifold $W'$ belongs to $(K,\epsilon)^\gs$ if there is a section $f$ of the normal bundle $NW\to W$ such that 
\begin{enumerate}
\item $\exp_W(f(W))\cap K = W'\cap K$ and 
\item\label{bla} $\|f(x)\| + \|\tau\circ D(f)(x)\| < \epsilon$ for all $x\in W$ such that $\exp_W\circ f(x)\in W'\cap K$.
\end{enumerate}
\end{df}
We now define the topology in $\pp$, the only difference being that instead of requiring $W'\cap K$ to be the image of a global section of $NW$, we only ask it to be the union of images of local sections of $NW$ whose domains cover $W$. The fact that this is the topology in the space of merging submanifolds (p. \pageref{merging}) is shown in \cite[Theorem~1]{cantero:metric}.

\begin{df}\label{df:ls} The space $\Pr(U)$ has the same underlying set as $\P(U)$, with neighbourhood basis of a proper submanifold $W$: every compact subset $K\subset U$ and every $\epsilon>0$ define a basic neighbourhood $(K,\epsilon)^\ls$ of $W$; a submanifold $W'$ belongs to $(K,\epsilon)^\ls$ if there is a subset $Q\subset NW$ such that the composite $Q\subset NW\to W$ is a smooth covering map onto $W\cap K$ and
\begin{enumerate}
\item\label{qui2} $\exp_W(Q)\cap K = W'\cap K$,
\item\label{bla2} $\|f(x)\| + \|\tau\circ D(f)(x)\| < \epsilon$ for each local section $f$ of the covering map $q$.
\end{enumerate}

\end{df}

Let $U\subset \bR^n$ be an open subset, let $X$ be an abelian topological monoid with unit and let $\psi_d(U;X)$ be the set of pairs $(W,\alpha)$ with $W\in \psi_d(U)$ and $\alpha\colon W\to X$ a continuous map.

\begin{df}\label{df:ms} The space $\Pg(U;X)$ has underlyting set $\psi_d(U;X)$, with the following neighbourhood basis of a pair $(W,\alpha)$: every compact subset $K\subset U$, every $\epsilon>0$ and every open neighbourhood $A$ of $\alpha_{|W\cap K}$ in $\map(W\cap K,X)$ define a basic neighbourhood $(K,\epsilon,A)^{\ms}$ of $W$; a submanifold $W'$ belongs to $(K,\epsilon,A)^\ms$ if there is a subset $Q\subset NW$ such that the composite $Q\subset NW\to W$ is a smooth covering map onto $W\cap K$ and
\begin{enumerate}
\item $\exp_W(Q)\cap K = W'\cap K$,
\item $\|f(x)\| + \|\tau\circ D(f)(x)\| < \epsilon$ for each local section $f$ of the covering map $q$.
\item The map $\beta\colon W\cap K\to X$, given by $\beta(x) = \sum_{y\in q^{-1}(x)}\alpha'(y)$, belongs to $A$.
\end{enumerate}

\end{df}

\begin{remark} \label{remark:100}
\begin{enumerate}
\item\label{item:100} If $W'\in (K,\epsilon)^\gs$ (respectively, $W'\in (K,\epsilon)^\ls$) and $f$ is a global (resp. local) section of the corresonding $q\colon Q\to W$, then \cite[Lemma~3.1]{cantero:metric}
\begin{equation}\label{eq:5}
d_0(x,f(x)) + d_1(T_xW,T_{f(x)}W')<\epsilon
\end{equation}
for all $x\in W$ such that $\exp_W\circ f(x)\in W'\cap K$.
\item Condition (\ref{bla}) in both definitions says that $f$ is $\epsilon$-close to the zero section in the $C^1$-topology. One can instead impose that condition in the $C^\infty$-topology. This would give, in the case of $\Psi_d(U)$, the actual definition given by Galatius and Randal-Williams. This change does not change the weak homotopy type of $\P(\bR^n)$ and $\pp$.
\item When $d=0$, the subspace of $\Psi_0(\bR^n)$ consisting of $0$-submanifolds contained in the unit ball, is the unordered configuration space on the unit ball, whereas that subspace in $\Prcero(\bR^n)$ is the Ran space of the unit ball. Think now of $S^n$ as the one-point compactification of $\bR^n$, with the north pole as the point at infinity. The space $\Prcero(\bR^n)$ is the subspace of the Ran space of $S^n$ consisting on configurations that contain the north pole. The space $\Pgcero(\bR^n;\bN\setminus\{0\})$ is the infinite symmetric product of $S^n$ with the north pole as the basepoint.
\item \label{remit:10} There is a natural identification of $\Psi(U)$ with the subspace of $\Pr(U;\bZ)$ of pairs of the form $(W,1)$, and a natural identification of $\Pr(U)$ with the subspace of $\Pr(U;\bZ)$ of pairs of the form $(W,0)$. 
\end{enumerate}
\end{remark}

We will give the proof of the first part of the main theorem only for the inclusion $\ll\hookrightarrow \Pr(\bR^n)$, as the proof for the other two inclusions is exactly the same. Here is an overview of the proof: Let $\ppp{\delta}\subset \pp$ be the subspace of those submanifolds $W$ for which the diameter of the image of the Gauss map
\[\Gauss\colon W\to \Gr_d(\bR^n)\]
is at most $\delta$. Then, in Proposition \ref{prop:2} we will prove that when $\delta>0$, the inclusion $\ppp{\delta}\subset \pp$ is a homotopy equivalence that fixes $\ll = \ppp{0}\subset \ppp{\delta}$. Then, in Proposition \ref{prop:4} we will assume that $\delta$ is small, and we will continuously assign to each submanifold $W\in \ppp{\delta}$, a $d$-plane $\mu(W)$ such that $\mu(W)^\perp$ is transverse to $W$. In Proposition \ref{prop:3} we will construct a deformation retraction $\ppp{\delta}\to \ll$ that stretches out $W$ in the direction $\mu(W)$. Finally, in Propositions \ref{prop:5} and \ref{prop:6} we find the homotopy type of $\cL(\bR^n)$ and $\ll$.

%

\begin{remark} The first part of the proof (Section \ref{section:almost}) can be significatively shortened if one is only interested in proving that the inclusion $\ll\subset \qq$ is a weak homotopy equivalence (see Remark \ref{remark:23}).
\end{remark}

\section{A technical lemma and an assumption}\mnote{fc:This section is complete and under revision}
\subsection{The action of a space of embeddings on the space of merging submanifolds}\label{s:sheaves}
The next lemma can be proven as in \cite[\S 2.2]{GR-W}, making the appropriate modifications. Instead, we will take advantage of knowing that the topology of $\Pr(U)$ is induced by the Fell topology.

\begin{lemma}\label{lemma:31} Let $U,V$ be open subsets of $\bR^n$ and let $\Emb(U,V)$ be the space of embeddings of $U$ into $V$ with the $C^1$ compact-open topology. Then the map
\begin{equation}\label{lemma:cont}\Emb(U,V)\times \Pr(V)\lra \Pr(U)\end{equation}
given by sending a pair $(f,W)$ to $f^{-1}(W)$ is continuous.
\end{lemma}
\begin{proof}
Let first $X,Y$ be metric spaces. The function 
\[\map(X,\bR)\to \CL(X)\]
 given by $f\mapsto f^{-1}(0)$ is easily seen to be continuous if $X$ is locally path-connected and in that case, it is a retraction of the map \eqref{eq:17}, hence a quotient map. Therefore, in the commutative diagram
\[\xymatrix{
\Emb(X,Y)\times \map(Y,\bR)\ar[r]\ar[d] & \map(X,\bR) \ar[d] \\
\Emb(X,Y)\times \CL(Y)\ar[r] & \CL(X)  
}\]
where the upper horizontal arrow is composition and the lower horizontal arrow is evaluation, the latter is continuous by the universal property of quotient maps. 

Now, let $U,V$ be open subsets of $\bR^n$. Then the map in the statement fits in the commutative diagram
\[\xymatrix{
\Emb(U,V)\times \Pr(\bR^n)\ar[r]\ar[d] & \Pr(\bR^n) \ar[d] \\
\Emb(U\times \Gr_d(\bR^n),V\times\Gr_d(\bR^n))\times \CL(V\times \Gr_d(\bR^n))\ar[r] & \CL(U\times \Gr_d(\bR^n)).
}\]
The lower horizontal map has been shown to be continuous, and the left vertical map is easily seen to be continuous too. Therefore the upper horizontal map is continuous because the right vertical map is an embedding.
\end{proof}

\begin{lemma}\label{lemma:32} Let $U,V$ be open subsets of $\bR^n$ and let $\Emb(U,V)$ be the space of embeddings of $U$ into $V$ with the $C^1$ compact-open topology. Then the map
\begin{equation}\Emb(U,V)\times \Pr(V;X)\lra \Pr(U;X)\end{equation}
given by sending a pair $(f,W)$ to $(f^{-1}(W),\alpha\circ f)$ is continuous.
\end{lemma}
\begin{proof}
Let $(f,(W,\alpha))\in \Emb(U,V)\times \Pr(V;X)$ and let $(K,\epsilon,A)$ be a neighbourhood of $(f^{-1}(W),\alpha\circ f)$. Let $K'\subset V$ be a compact subset containing a relatively compact open neighbourhood $B$ of $f(K)$. Let $(K,B)$ be the open neighbourhood of $f$ consisting of embeddings that send $K$ into $B$. Let $(K,\epsilon)$ be the open neighbourhood of $f$ consisting of embeddings for which 
\begin{align*}
\|f(x)-g(x)\|&<\epsilon &\text{ for all $x\in K$}\\
\|Df(x) - Dg(x)\| &< \epsilon &\text{ for all $x\in K$}.
\end{align*}
Then there are $\delta_0,\delta_1>0$ and an $A$ for which the image of $((K,B)\cap (K,\delta_0))\times (K',\delta_1,A)$ is contained in $(K,\epsilon,A)$. 
\end{proof}
These maps preserve the subspaces $\P(U)$ and $\Pr(U)$ (see Remark \ref{remark:100} \eqref{remit:10}). In particular, Lemma \ref{lemma:32} gives another proof of Lemma \ref{lemma:31} and of Section 2.2 in \cite{GR-W}.

\subsection{A more convenient assumption}
Let $B^n$ be the open unit ball in $\bR^n$. By the previous lemma, any diffeomorphism $f\colon B^n\to \bR^n$ 
induces a homeomorphism $\qq\to \pp$. Our theorem will be proven by showing that the inclusion $\widetilde{\mathcal{L}}_d(B^n)\to \qq$ is a homotopy equivalence, where $\widetilde{\mathcal{L}}_d(B^n)$ consists on intersections of affine planes with the unit ball. The reason for this change is that it will be convenient to make use of diffeomorphisms between $B^n$ and balls of different radius fixing the subspaces $\widetilde{\mathcal{L}}_d(B^n)$, and this is more complicated we take $\bR^n$ instead.


\section{The space of almost linear submanifolds} \label{section:almost}
Recall that, for a submanifold $W$ of $B^n$, the Gauss map $$\Gauss\colon W\to \Gr_d(\bR^n)$$ sends a point to its tangent plane. Let us 
 write $\diam$ for diameter. 
\begin{df} The space of $\delta$-almost linear submanifolds of $B^n$ is the subspace $\pp^{\delta}\subset \qq$ of those submanifolds $W$ such that $\diam\circ \gauss (W)<\delta$. 
\end{df}
In this section we prove that the inclusion $\qq^{\delta}\subset \qq$ is a homotopy equivalence that fixes $\ll$ when $\delta>0$. 
\begin{remark}\label{remark:23} To prove that this inclusion is a weak homotopy equivalence is easier: Let us see first that it is surjective on components: if $W\in \qq$, then there exists an $\epsilon>0$ such that for all $0<a\leq \epsilon$, $(\frac{1}{a}\cdot W)\cap B^n$ is in $\qq^\delta$. Then, by \eqref{lemma:cont}, the isotopy $t\mapsto (x\mapsto \frac{1}{1+(\epsilon-1)t})$ induces a path from $(\frac{1}{\epsilon}\cdot W)\cap B^n$ to $W$. 

Now, given a lifting problem
\[\xymatrix{
\partial D^n \ar[d]\ar[r]^f & \qq^\delta \ar[d]\\
D^n\ar[r]^g & \qq},\]
let 
\[\epsilon(x) = \sup\{\epsilon\in (0,1]\mid \forall a<\epsilon, \frac{1}{a}f(x)\in \qq^\delta\}.\]
One can use that $D^n$ is compact to find an $\epsilon$ as above that works for all the values of $g$ at once. Then, using this $\epsilon$, one replaces this lifting problem by a homotopy equivalent lifting problem in which the lift exists on the nose.  
\end{remark}

\subsection{Another technical lemma} Let $F_{\geq}([0,1))$ be the set of bounded non-decreasing real functions from $[0,1)$ to $[0,\infty)$ that preserve $0$, and let $C_>([0,1))$ be the subset of continuous increasing functions. We topologize these spaces as follows: 
For each positive real number $\epsilon$, there is a basic neighbourhood $(\epsilon)^F$ of a function $f$. Another function $g$ is in the $(\epsilon)$-neighbourhood of $f$ if for all $s \in [\epsilon,1-\epsilon]$, $f(s-\epsilon)-\epsilon \leq g(s) \leq f(s+\epsilon)+\epsilon$. The subspace $C_>([0,1))$ has the compact-open topology.

\begin{df} For each $f\in F_{\geq}([0,1))$, define $\rho(f)\colon [0,1)\to [0,\infty)$ as 
\[\rho(f)(x) = \int_x^{\sqrt{x}}{f(y)dy}.\]
\end{df}
\begin{lemma}\label{lemma:operator0} \begin{enumerate}
\item If $f\in F_{\geq}([0,1))$, then $\rho(f)\in C_{\geq}([0,1))$ and $f\leq \rho(f)$;
\item if $f\in F_>([0,1))$, then $\rho(f)\in C_>([0,1))$ and $f<\rho(f)$;
\item if $f=0$, then $\rho(f) = 0$.  
\end{enumerate}
\end{lemma}
\begin{proof}
The third point is obvious. For the other two, observe first that the value of $\rho(f)$ on $x$ is the mean value of $f$ in the inverval $[x,\sqrt{x}]$.
\begin{enumerate}
\item As $f$ is non-decreasing, the mean value of $f$ in $[x,\sqrt{x}]$ is non-decreasing too, and it is bigger or equal than the value of $f$ at the initial point of the interval.
\item If $f$ is strictly increasing, then the mean value of $f$ in $[x,\sqrt{x}]$ is strictly increasing too, and strictly bigger than the value of $f$ at the initial point of the interval.\qedhere
\end{enumerate}
\end{proof}

\begin{lemma}\label{lemma:operator} The operator $\rho\colon F_{\geq }([0,1))\to C_{\geq}([0,1))$ is continuous.
\end{lemma}
\begin{proof}
If $g$ is in the $\epsilon$-neighbourhood of $f$, then 
\begin{align*}
&\left|\int_{x}^{\sqrt{x}} f(y) dy - \int_x^{\sqrt{x}} g(y) dy\right| & \\
&\leq \int_{0}^{1} |f(y) - g(y)|dy &  \\ 
&\leq \int_{0}^\epsilon |f(y)-g(y)|dy + \int_{\epsilon}^{1-\epsilon} f(y+\epsilon)+\epsilon - (f(y-\epsilon)-\epsilon) dy + \int_{1-\epsilon}^{1} |f(y)-g(y)|dy &\\
& \leq \epsilon + \int_{\epsilon}^{1-\epsilon}2\epsilon dy + \int_{2\epsilon}^{1} f(y)dy - \int_{0}^{1-2\epsilon} f(y) dy + \epsilon &\\
& \leq 2\epsilon + 2\epsilon(1-2\epsilon) + \left(\int_{2\epsilon}^{1-\epsilon} f(y) dy +  \int_{1-\epsilon}^{1} f(y) \right)- \left(\int_{0}^{\epsilon} f(y) dy + \int_{\epsilon}^{1-2\epsilon} f(y) dy\right) + \epsilon &\\
& \leq \epsilon + 2\epsilon(1-2\epsilon) + \left(\int_{2\epsilon}^{1-\epsilon} f(y) dy -   \int_{\epsilon}^{1-2\epsilon} f(y) dy\right)+ \left(\int_{1-\epsilon}^{1} f(y) -\int_{0}^{\epsilon} f(y) dy \right)+ \epsilon&\\
&\leq 2\epsilon + 2\epsilon(1-2\epsilon) + (\epsilon) + (\epsilon) + \epsilon.&\qedhere 
\end{align*}
\end{proof}

If we let $\overline{X}$ denote the one-point compactification of a locally compact space $X$, then there is a continuous map 
\begin{align}\label{eq:oe}
\mathrm{O}\Emb(X,Y)\lra \map(\overline{Y},\overline{X})
\end{align}
from the space of open embeddings of $X$ into $Y$ to the mapping space between $\overline{Y}$ and $\overline{X}$, both endowed with the compact-open topology (this is an adaptation of the second part of the proof of Theorem 4 in \cite{Arens}, cf.\ \cite{Cantero:collapse-mathoverflow}). It is given by sending an embedding $e$ to the map that sends a point $y$ to $e^{-1}(y)$ if the latter exists and to $\infty$ otherwise.

The following lemma and its use was suggested to me by Abd\'o Roig.


\begin{lemma}\label{lemma:abdo} 
For each $\delta>0$, there exists a continuous function $\sigma\colon F_{\geq}([0,1)) \to (0,\frac{1}{2})$ such that $f(\sigma(f))<\delta$ for all $f$ and $\sigma(0) = 1$. 
\end{lemma}
\begin{proof}

It is easy to see that the assignment $f\mapsto (y\mapsto f(y)+\delta y)$ defines a continuous function
\[\tau\colon F_{\geq}([0,1))\longmapsto F_>([0,1))\]
with the property that $\tau(f)\geq f$. By Lemma \ref{lemma:operator}, composing it with $\rho$, we obtain a continuous function to $C_>([0,1))$.

Functions in $C_>([0,1))$ are not invertible, but they are open embeddings, hence the map \eqref{eq:oe} particularises in this case to a map
\[\mathrm{inv}\colon C_>([0,1))\overset{f\mapsto f^{-1}}\lra C_{\geq}([0,\infty]).\]
Therefore, if we write $\mathrm{ev}_\delta$ for the evaluation at $\delta$, then the composite
\[\sigma\colon F_{\geq}([0,1))\overset{\tau}{\lra} F_>([0,1))\overset{\rho}{\lra} C_>([0,1)) \overset{\mathrm{inv}}{\lra} C_{\geq}([0,\infty]) \overset{\mathrm{ev}_\delta}{\lra} [0,1] \]
is continuous. Writing $g=\tau(f)$ and using that $f< g$ and that $g< \rho(g)$ (see Lemma \ref{lemma:operator0}),
\[f(\sigma(f)) = f(\rho(g)^{-1}(\delta))< g(\rho(g)^{-1}(\delta)) < \rho(g)(\rho(g)^{-1}(\delta)) = \delta.\]
In addition, we have that if $f=0$, then $\sigma(f) = \rho(\tau(0))^{-1}(\delta) = \rho(\delta x)^{-1}(\delta) = 1$.
\end{proof}

\subsection{The space of $\delta$-linear submanifolds}
Let us define
\[\vartheta\colon \qq\lra F_\geq([0,1))\]
as the adjoint of the map $\qq\times [0,1)\to [0,\infty)$ that sends a pair $(W,\epsilon)$ to $\diam\circ\Gauss(W_\epsilon)$, where $W_\epsilon = W\cap B(\epsilon)$ and $B(\epsilon)$ is the ball of radius $\epsilon$ centered at the origin.

\begin{lemma} The map $\vartheta$ is continuous.
\end{lemma}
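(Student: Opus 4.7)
My plan is to show continuity of $\vartheta$ at an arbitrary $W\in\qq$ directly from the definitions. Given a basic neighbourhood $(t,\eta)^F$ of $f:=\vartheta(W)$ (with $\eta<t$), I will exhibit a neighbourhood of $W$ in $\qq$ whose image lies in $(t,\eta)^F$. The key observation is that if $W'$ lies in $(K,\eta')^{\Pr}$ with associated covering map $q\colon Q\subset NW\to W$, then the local sections of $q$ set up a partial correspondence $x\leftrightarrow y=\exp_W(v)$ between points of $W$ and of $W'\cap K$ for which \eqref{eq:5} yields $\|x-y\|<\eta'$ and $d(T_xW,T_yW')<\eta'$. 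This lets one compare $\Gauss(W\cap B(s))$ and $\Gauss(W'\cap B(s))$ up to an $\eta'$-shift in the radius and a $2\eta'$-error in the Hausdorff metric on $\Gr_d(\bR^n)$, which is exactly the shift-and-error structure that the topology on $F_\geq([0,1))$ is built to accommodate.

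The empty case is immediate: if $W=\emptyset$ then $f\equiv 0$, and the basic neighbourhood $(K)^{\Pr}$ with $K=\overline{B}(1-t/2)$ forces $W'\cap B(s)=\emptyset$ for every $s<1-t$, so $\vartheta(W')\equiv 0$ on $[0,1-t)$. For $W\neq\emptyset$, I would take $K=\overline{B}(1-t/2)$ and $\eta':=\min(\eta/2,t/2)$ and verify the two-sided bound defining $(t,\eta)^F$ uniformly in $s\in[0,1-t)$.

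For the upper bound $\vartheta(W')(s)\leq f(s+\eta)+\eta$: every $y\in W'\cap B(s)$ equals $\exp_W(v)$ for some $v\in N_xW$ with $\|v\|<\eta'$, so $x\in W\cap B(s+\eta')$ and $T_yW'$ lies within $\eta'$ of $T_xW$; hence $\Gauss(W'\cap B(s))$ is contained in the $\eta'$-neighbourhood of $\Gauss(W\cap B(s+\eta'))$, and taking diameters gives $\vartheta(W')(s)\leq f(s+\eta')+2\eta'\leq f(s+\eta)+\eta$, using monotonicity of $f$ and $\eta'\leq\eta/2$. For the lower bound $f(s-\eta)-\eta\leq\vartheta(W')(s)$: for each $x\in W\cap B(s-\eta)$ (with $B(r):=\emptyset$ when $r<0$) I would pick any lift $v\in q^{-1}(x)$; then $y:=\exp_W(v)$ satisfies $\|y\|<s-\eta+\eta'\leq s$ and $\|y\|<1-t/2$, so $y\in W'\cap K\cap B(s)$, and symmetrically $\Gauss(W\cap B(s-\eta))$ sits in the $\eta'$-neighbourhood of $\Gauss(W'\cap B(s))$, giving the desired bound. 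The only point requiring care is that the choice of $K$ and $\eta'$ must be independent of $s\in[0,1-t)$, which is why $K=\overline{B}(1-t/2)$ with $\eta'\leq t/2$ is chosen to absorb every $\eta'$-shifted ball; I anticipate no substantive obstacle beyond this bookkeeping.
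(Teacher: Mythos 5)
Your proof is correct and follows essentially the same route as the paper's: choose a closed disc $K$ whose radius is tied to $t$, use the defining condition \eqref{eq:5} of the $\widetilde{\Psi}$-topology to obtain a two-sided Hausdorff approximation of $\Gauss(W'\cap B(s))$ by $\Gauss(W\cap B(s\pm\epsilon))$, and transfer diameters. You are a bit more careful than the paper with constants (correctly tracking the factor of $2$ in the diameter estimate, which forces the $\eta'=\eta/2$ shrinkage) and you treat the $W=\emptyset$ case explicitly, but the core argument is identical.
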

\begin{proof}
Let $(\epsilon)^F$ be a neighbourhood of $\vartheta(W)$. Let $K(r)\subset B^n$ be the closed disc with radius $r = 1-\epsilon$, and let $(K(r),\epsilon)$ be a neighbourhood of $W$. If $s<r$ and $W'\in (D(s),\epsilon)^{\widetilde{\Psi}}$, then, by condition (\ref{qui2}) and by \eqref{eq:5} in the definition of the topology of $\pp$ the followig holds: for each $y\in W'_{s}$, there is an $x\in W$ at distance at most $\epsilon$ (so $x\in W_{s+\epsilon}$), and such that $T_yW'$ and $T_xW$ are also at distance at most $\epsilon$. Therefore for all $s<r$: 
\[\diam\circ \Gauss(W'_s)\leq \diam\circ \Gauss(W_{s+\epsilon}) + \epsilon.\]
 Similarly, using \eqref{eq:5},
\[\diam\circ \Gauss(W'_s)\geq \diam\circ\Gauss(W_{s-\epsilon}) - \epsilon.\]
 As a consequence, if we write $f=\vartheta(W)$ and $g=\vartheta(W')$,
\[f(s-\epsilon)-\epsilon  \leq g(s)\leq  f(s+\epsilon) + \epsilon.\]
Hence $\vartheta((D(r),\epsilon)^\ls)\subset (\epsilon)^F$.
\end{proof}


\begin{proposition}\label{prop:2} For each $\delta>0$, the inclusion $\qq^\delta\subset \qq$ has a homotopy inverse through a homotopy that fixes $\qq^0$ pointwise. 
\end{proposition}
\begin{proof}
For a positive real number $r$, let $f_{r,t}\in \Emb(B^n,B^n)$ be the isotopy of embeddings $f_{r,t}(x) = (t+(1-t)r)x$, and let $f_r = f_{r,0}$. Let $\sigma$ be given by Lemma \ref{lemma:abdo}. Define $h\colon \qq\to \qq^\delta$ by $W\mapsto f_{\sigma\circ\vartheta(W)}^{-1}(W)$, which is continuous by Lemma \ref{lemma:cont}, and is well-defined because
\[h(W) = \frac{1}{\sigma\circ \vartheta(W)}(W\cap B(\sigma\circ\vartheta(W)))\]
and
\begin{align*}\diam\circ\gauss(\frac{1}{\sigma\circ\vartheta(W)}(W\cap B(\sigma\circ\vartheta(W))))& = \diam\circ\gauss(W\cap B(\sigma\circ \vartheta(W))) \\
& = (\vartheta(W))(\sigma\circ \vartheta(W)) \leq \delta\end{align*}
(the latter inequality follows from Lemma \ref{lemma:abdo}). 

By Lemma \ref{lemma:cont}, the map $H_t\colon \qq\to \qq$ (resp.\ $G_t\colon \qq^\delta\to\qq^\delta$) that sends a submanifold $W$ to $f_{\sigma\circ\vartheta(W),t}^{-1}(W)$ defines a homotopy between $i\circ h$ and the identity (resp.\ $h\circ i$ and the identity). It is clear that $\qq^0$ remains fixed because the value of $\varpi$ on an element $W$ of $\qq^0$ is the constant function with value $0$, and so the value of $\sigma$ on $\varpi(W)$ is $1$.
%
%
%
\end{proof}

\section{The space of linear submanifolds} 

Let $\bump\colon [0,1)\to [0,1]$ be a bump function that takes value $1$ in a neighbourhood of $0$ and value $0$ on $[\frac{1}{2},1]$ and whose slope is bounded by $7$. If $x\in B^n$, we write $\bump(x):= \bump(\|x\|)$.

\begin{df} For each non-empty $W\in \qq$ and each plane $L\in \Gr_d(\bR^n)$, define 
\begin{equation}\label{eq:929}\lambda(W,L) = \frac{1}{2\int_{W} \bump(x) dx}\int_{W} \bump(x)d(L,T_xW)^2dx.\end{equation}
\end{df}
As a consequence of Karcher's theorem \cite[Theorem 1.2]{Karcher}, we have:
\begin{proposition}\label{prop:karcher} There exists a $\delta>0$ (that depends only on the metric of $\Gr_d(\bR^n)$) such that if $\diam\circ\Gauss(W)<\delta$, 
then for all $L\in \Gauss(W)$, the function $\lambda(W,-)$ is convex in $B_\delta(L)$. 
\end{proposition}
Observe in addition that the global minima of $\lambda(W,-)$ lie in the convex hule of $\Gauss(W)$, which is contained in $B_\delta(L)$, and therefore $\lambda(W,-)$ has a unique global minimum.

\begin{df} Let $\delta$ be given by Proposition \ref{prop:karcher} theorem, and define the map
\[\mu\colon \qq^\delta\setminus \{\emptyset\}\lra \Gr_d(\bR^n)\]
whose value on a submanifold $W$ is the minimum of $\lambda(W,-)$ (cf.\ with the concept of \emph{selection} \cite[p.\ 154]{Michael}).
\end{df}
The main property of this function is that it assigns to each submanifold $W$ whose Gauss map has small diameter, a plane $\mu(W)$ that is near each tangent plane of $W$ (so its orthogonal complement $\mu(W)^\perp$ is transverse to $W$):
\begin{align}\label{d-mu}
d(\mu(W),T_xW) &\leq \diam\circ\Gauss(W) \text{ for all $x\in W$.}
\end{align} 
because otherwise, evaluation on \eqref{eq:929} gives that
\[\lambda(W,T_xW) < \lambda(W,\mu(W)) \text{ for some $x\in W$},\]
contradicting the definition of $\mu(W)$.
\begin{proposition}\label{prop:4} The map $\mu$ is continuous.
\end{proposition}
\begin{proof}
We will prove that the function $\lambda(-,-)$ is continuous. Then, because $\Gr_d(\bR^n)$ is a compact metric space, the adjoint $\alpha\colon \qq\to \map(\Gr_d(\bR^n),\bR)$ is continuous too. The target of this map is actually the subspace $X$ of those maps that
\begin{enumerate}
\item have a unique global minimum,
\item are convex in the ball of radius $\delta$ around the global minimum.
\end{enumerate}
Taking the global minimum defines a continuous function $\beta\colon X\to \Gr_d(\bR^n)$, and therefore $\mu = \beta\circ \alpha$ is continuous as well.

Let $(W,L)\in \Pr(U)\times \Gr_d(\bR^n)$, and let $(K_t,\epsilon)^\ls$ be a neighbourhood of $W$, where $K$ is a disc of radius $t$. Let $W'\in (K,\epsilon)^\ls$ and let $L'\in B_\eta(L)$, the ball of radius $\eta$ centered at $L$. Then, by Definition \ref{df:ls}, there is a covering map $q\colon Q\subset NW\to W$ over $W\cap K$ such that
\begin{enumerate}
\item $\exp_W(Q)\cap K \subset W'\cap K$,
\item $\|f(x)\| + \|\tau\circ(Df)(x)\| < \epsilon$ for each local section $f$ of the covering map.
\end{enumerate}
Let $t\geq \frac{3}{4}$ and let $\epsilon < \frac{1}{4}$ and define $V:= W\cap K_t$ and $V' := q^{-1}(W_{t})$. Then:
\begin{align*}
\int_{W} \bump(x)d(L,T_xW)^2dx &= \int_{V} \bump(x)d(L,T_xW)^2dx \\
\int_{W'} \bumpp(x)d(L,T_xW)^2dx &= \int_{V'} \bump(x)d(L,T_xW)^2dx
\end{align*}
because $\bump(x) = 0$ if $\|x\|>\frac{1}{2}$ and $W\cap K_{\frac{1}{2}}\subset V\subset W$ and $W'\cap K_{\frac{1}{2}}\subset V'\subset W'$. 
Therefore we have that the difference $\left|\lambda(W,L)-\lambda(W,L)\right|$ is
{\footnotesize
\begin{align*}
=&\left|\frac{1}{\int_W \bump(x) dx}\int_{W} \bump(x)d(L,T_xW)^2dx - \frac{1}{\int_{W'} \bumpp(y) dy}\int_{W'} \bumpp(y)d(L',T_yW')^2dy\right|&\\
=&\left|\frac{1}{\int_V \bump(x) dx}\int_{V} \bump(x)d(L,T_xW)^2dx - \frac{1}{\int_{V'} \bumpp(y) dy}\int_{V'} \bumpp(y)d(L',T_yW')^2dy\right|&\\
=&\left|\frac{1}{\int_V \bump(x) dx}\int_{V} \bump(x)d(L,T_xW)^2dx - \frac{1}{\int_{V'} \bumpp(y) dy}\int_{V} \sum_{y\in q^{-1}(x)}\bumpp(y)d(L',T_yW')^2\|\det J_y(q)\|dx\right|.&
\end{align*}}
Now, we write $A = \int_V \bump(x) dx, A' = \int_{V'} \bumpp(x) dx$:
\begin{align*}
&=\frac{1}{AA'}\left|\int_{V} A'\bump(x)d(L,T_xW)^2dx - \int_V\sum_{y\in q^{-1}(x)} A\bumpp(y)d(L',T_yW')^2\|\det J_y(q)\| dx\right|&
\end{align*}
Now, let $k$ be the cardinality of $q^{-1}(x)$, and replace the second ocurrence of $A'$ by $kA + |A'-kA|$:
{\footnotesize \begin{align*}
&\leq\frac{1}{AA'}\left|\int_{V} (kA+|A'-kA|)\bump(x)d(L,T_xW)^2dx - \int_V \sum_{y\in q^{-1}(x)}  A\bumpp(y)d(L',T_yW')^2\|\det J_y(q)\| dx.\right|&
\end{align*}}
Now, move the term $kA\bump(x)d(L,T_xW)^2$ to the second integral:
{\footnotesize \begin{align*}
&\leq\frac{1}{AA'}\left|\int_{V} |A'-kA|\bump(x)d(L,T_xW)^2dx - \int_{V} \sum_{y\in q^{-1}(x)} |A\bump(x)d(L,T_xW)^2 - A\bumpp(y)d(L',T_yW')^2\|\det J_y(q)\| |dx\right|&
\end{align*}}
rearranging the $A$ and the $A'$:
{\footnotesize
\begin{align*}
&=\frac{|A'-kA|}{A'}\int_V \bump(x)d(L,T_xW)^2dx - \frac{1}{A'}\int_{V} \sum_{y\in q^{-1}(x)} |\bump(x)d(L,T_xW)^2 - \bumpp(y)d(L',T_yW')^2\|\det J_y(q)\| |dx&
\end{align*}}
and using that $ab-cd = a(b-d) + (a-c)d$ with
\begin{enumerate}
\item $a = \bump(x)$,
\item $b = d(L,T_xW)^2$
\item $c = \bumpp(y)\|\det J_y(q)\|$
\item $d = d(L',T_yW')^2$
\end{enumerate} we have that the last display is
{\footnotesize
\begin{equation}\label{eq:p0}
\begin{split}
&\frac{|A'-kA|}{A'}\int_V \bump(x)d(L,T_xW)^2dx\\ 
&-\frac{1}{A'}\int_{V} \sum_{y\in q^{-1}(x)} \left|\bump(x)|d(L,T_xW)^2 - d(L',T_yW)| + |\bump(x) - \bump(y)\|\det(J_y(q))\||d(L',T_yW')^2 \right|dx
\end{split}
\end{equation}}
First, using the triangle inequality one has that
\[|d(L,T_xW) - d(L',T_yW')|\leq d(L,L')+d(T_xW,T_yW'),\]
and both 
If we write this inequality as $|a-b|\leq c+d$, multiplying both sides by then we have that
\[|a^2-b^2|\leq (a+b)(c+d)\]
and here $c<\eta$, $d<\tan(\epsilon)<\epsilon$ (see Remark \ref{remark:100} \eqref{item:100}) and $a,b\leq \pi/2$. In addition, $\bump(x)\leq 1$ therefore
\begin{equation}\label{eq:p1}\bump(x)|d(L,T_xW) - d(L',T_yW')|\leq \pi\cdot (\epsilon+\eta).\end{equation}
Second, using that the slope of $\bump$ is at most $7$, that $|\|x\|-\|y\||<\epsilon$ and that $\nu(x)\leq 1$, we have that
\begin{align*}
|\bump(x) - \bump(y)\det(J_y(q))| &\leq |\bump(x) - \bump(y)| + |\bump(y)(1-\det J_y(q))| \\
&\leq 7\epsilon + |1-\det J_y(q))\end{align*}
As a consequence (and using that $d(L,T_xW)^2\leq \pi/2$), \eqref{eq:p0} is bounded by
\begin{equation}\label{eq:31}\frac{|A'-kA|}{A'}A + k\pi\frac{((\epsilon+\eta)+(7\epsilon+|1-\det J_y(q)|)/2}{A'}{\mathrm{vol}}(V).\end{equation}
\def\vol{\mathrm{vol}}
Finally, $|A'-kA|$ is bounded by
\[\int_{V'} \bump(x) dx - k\int_V \bump(x)dx = \int_{V} \sum_{y\in q^{-1}(x)} \bump(y) dx - k\int_V \bump(x)dx = \int_V\sum_{y\in q^{-1}(x)}(\bump(x)-\bump(y)) dx.\]
Since $\bump$ has slope at most $7$ and $d(x,y)<\epsilon$, we have that the latter expression is bounded by $7k\epsilon\mathrm{vol}(V)$. Using this, it follows that $A'\geq |kA - 7k\epsilon \vol(V)| \sim kA$ if $\epsilon$ is very small. Hence $\eqref{eq:31}$ is bounded by:

\begin{align*}
\frac{7k\epsilon \vol(V)}{|kA-7k\epsilon\vol(V)|}A + k\pi\frac{((\epsilon+\eta)+(7\epsilon+|1-\det J_y(q)|)/2}{A'}{\mathrm{vol}}(V)
\end{align*}
which can be made arbitrarily small taking $\epsilon$ and $\eta$ small enough (notice that $|1-\det J_y(q)|$ is bounded by a continuous function of $\epsilon$ which does not depend on $W'$).
%
%
\end{proof}


\begin{proposition}\label{prop:3} Let $\delta$ be given by Proposition \ref{prop:karcher}, and assume that $\delta<\pi/2$. Then the inclusion $\widetilde{{\mathcal L}}_d(B^n)\subset \qq^{\delta}$ is a deformation retract.
\end{proposition}
\begin{proof}
If $P\in \Gr_d(\bR^n)$, and $\pi,\pi^\perp$ are the projections onto $P$ and its orthogonal complement, we let $g_{t,P}(x) = t\cdot \pi(x) + \pi^\perp(x)$ for all $x\in B^n$. This defines a continuous map
\[g\colon [0,1]\times \Gr_d(\bR^n)\times B^n \lra B^n.\]
Let $G_t\colon \qq^\delta \to \qq$ be the homotopy 
\[
G_t(W) = \begin{cases}
g_{t,\mu(W)}^{-1}(W) & \text{ if $t>0$} \\
\bigcup_{x\in \mu(W)^\perp\cap W} (x+\mu(W))\cap B^n & \text{ if $t=0$} \\
\emptyset & \text{ if $W=\emptyset$},
\end{cases}
\]
where $\mu(W)^\perp\cap W$ is discrete because $W$ intersects $\mu(W)^\perp$ transversely:  By \eqref{d-mu}, we have that for all $x\in W$,
\begin{equation}\label{eq:09}
d(T_xW,\mu(W))\leq \diam\circ\Gauss(W)<\delta<\pi/2.
\end{equation}
By Lemma \ref{lemma:31} and because $g$ and $\mu$ are continuous, $G_t$ is continuous for $t>0$. Let us see now what happens when $t=0$.


 


We identify $NG_0(W) = G_0(W)\times \mu(W)^\perp\subset \bR^n$, so that the exponential map $\exp_{G_0}^{-1}\colon B^n\to \bR^n$ is the inclusion. Then by \eqref{eq:09}, we have that the restriction $q_t$ of the projection of $NG_0(W)$ onto $G_0(W)$ to $G_t(W)$ is a covering map and that if $f_t$ is a local section of $q_t$, 
\begin{align}\label{eq:449}
\begin{split}
\|f_t(x)\| &= f_1(tx) = d_0(tx,G_0(W))\\
\|\tau\circ (Df_t)(x)\|  &= t\cdot \|\tau\circ (Df_1)(tx) \|  \\
&= t\cdot \tan(d_1(\mu(W),T_{tx}(W))) \leq  t\cdot\tan(\delta).
\end{split}
\end{align}
For each $\epsilon>0$, let $r_\epsilon$ be so small that if $x\in B_{r_\epsilon}(\mu(W)^\perp)\cap W$, then $d_0(x,G_0(W))<\epsilon/2$. Then by the formula above, if $t<r_\epsilon$, then $\|f_t(x)\|<\epsilon/2$ for all $x$. If in addition $t<\frac{\epsilon}{2\tan(\delta)}$, then we have that $\|\tau\circ (Df_t)(x)\|<\epsilon/2$. Therefore, if $(K,\epsilon)$ is a neighbourhood of $G_0(W)$,
\[t<\lambda_\epsilon:= \min\{r_\epsilon,\frac{\epsilon}{2\tan(\delta)}\}\Rightarrow G_t(W)\in (K,\epsilon).\]

%
%
%
%
%
%
If $(K,\epsilon)^\ls$ is a basic neighbourhood of $W$, we will denote it by $(K,\epsilon)_W$ during the rest of the proof. Observe that if $\epsilon'>0$ and $K$ is compact, then 
\[G_t((K',\epsilon')_{W}) \subset (K',\epsilon')_{G_t(W)},\]
\[(K',\epsilon')_{G_t(W)}\subset (K,\epsilon)_{G_0(W)} \Rightarrow (K',\epsilon')_{G_s(W)}\subset (K,\epsilon)_{G_0(W)} \text{ for all }s\leq t.
\]
Let $(K,\epsilon)_{G_0(W)}$ be a neighbourhood of $G_0(W)$. We have seen that $G_{t}(W)\in (K,\epsilon)_{G_0(W)}$ for all $t\leq \lambda_\epsilon$, so we may pick a neighbourhood $(K',\epsilon')_{G_{\lambda_\epsilon}(W)}$ of $G_{\lambda_\epsilon}(W)$ contained in $(K,\epsilon)_{G_0(W)}$. Then, for all $t<\lambda_\epsilon$, we have that $(K',\epsilon')_{G_{t}(W)}$ is a neighbourhood of $G_t(W)$ that is contained in $(K,\epsilon)_{G_0(W)}$. 

Then we have that $G(t,W)=G_t(W)$ sends the neighbourhood $[0,\lambda_{\epsilon}/2)\times (K',\epsilon')_{W}$ of $(0,W)$ into the neighbourhood $(K,\epsilon)_{G_0(W)}$ of $G_0(W)$. Hence $G_t$ is continuous around $t=0$.

In particular, $g:= G_0$ is well-defined and lands in $\widetilde{{\mathcal L}}_d(B^n)$, and $i\circ g$ is homotopic to the identity through the homotopy $G_t$. On the other hand, $g\circ i$ is the identity because the value of $\mu$ on an affine plane $P$ is obtained by translating $P$ to the origin, and so $G_t$ applied to a plane is constant in $t$, hence $g$ restricts to the identity on $\widetilde{{\mathcal L}}_d(B^n)$.
\end{proof}
\section{The homotopy type of the space of linear submanifolds}
\begin{proposition}\label{prop:5} (cf. \cite[Lemma~6.1]{galatius-2006}) The space $\widetilde{{\mathcal L}}_d(B^n)$ is weakly contractible.
\end{proposition}
\begin{proof}
Let $C_d(\bR^n)\subset \ll$ be the subspace of those non-empty unions of affine planes, all of whose origins (i.e.\ their closest points to the origin of $\bR^n$) are at distance at most $1$ from the origin of $\bR^n$. This is a closed subset. Let $U\subset \ll$ be the subspace of those (possibly empty) unions of planes that do not contain the origin. Then, there is a pushout square
\[\xymatrix{
U\cap C_d(\bR^n) \ar[d]\ar[rr] && U\ar[d] \\
C_d(\bR^n) \ar[rr] && \ll
}\]
which is also a homotopy pushout square because the upper horizontal arrow is a cofibration. Now, a point in $C_d(\bR^n)$ is a collection of parallel planes, and remembering the underlying linear plane defines a map $C_d(\bR^n)\to \Gr_d(\bR^n)$ that is also a fibre bundle. Its fibre over a plane $P$ is the space $C_0(P^\perp)$, which is the Ran space of $P^\perp$ and is well-known to be weakly contractible 
\cite[Appendix]{Gaitsgory}. Therefore $C_d(\bR^n)\simeq \Gr_d(\bR^n)$. The same argument proves that $U\cap C_d(\bR^n)\simeq \Gr_d(\bR^n)$, and since the left vertical map is a map over $\Gr_d(\bR^n)$, it is a weak homotopy equivalence. On the other hand, $U$ is contractible as the homotopy $(W,t)\mapsto \frac{1}{1-t}W$ defines a contraction of $U$ to the empty submanifold. As a consequence, the homotopy pushout $\ll$ is weakly contractible as well.
\end{proof}
%
%

\begin{proposition}\label{prop:6} The subspace $\Th(\gamma_d^\perp(B^n))$ of $\cL_d(B^n)$ that consists of connected or empty submanifolds, is a strong deformation retract. 
\end{proposition}
\begin{proof}
If $W\in \cL_d(B^n)$, let $W_{\mathrm{first}}$ be the closest component of $W$ to the origin of $B^n$, and let $f(W)>0$ be the distance from $W\setminus W_{\mathrm{first}}$ to the origin. This is a continuous function $f\colon \cL_d(B^n)\to [0,1]$. Then $h(W) = \frac{1}{f(W)}W$ defines a map $h\colon \cL_d(B^n)\to \Th(\gamma_d^\perp(B^n))$ and $h\circ i$ is the identity and $i\circ h$ is homotopic to the identity through the homotopy $H_t(W) = \frac{1}{(1-t)+tf(W)}W$.
\end{proof}

\section{Microflexibility}\label{s:4}\mnote{fc:This section is complete but needs a revision}

The spaces $\P(U)$ and $\Pr(U)$ glue together to form sheaves $\P(-)$ and $\Pr(-)$ on $\bR^n$: If $U\subset U'$ is a pair of open subsets then the restriction map $\Pr(U')\to \Pr(U)$ sends a submanifold $W$ to the intersection $W\subset W'$, and we have proven that these are continuous in families in Lemmas \ref{lemma:31} and \ref{lemma:32}. A sheaf of topological spaces in $\bR^n$ extends canonically to a sheaf of topological spaces in the site of manifolds and open embeddings \cite[Theorem~3.3]{R-WEmbedded}. 

%
%
%

At this point, one is tempted to use the methods in the latter article to extend our theorem to the space of merging submanifolds in an arbitrary open manifold: In that article, it was proven that the sheaf $\Psi(-)$ on a manifold $M$ is $\Diff(M)$-equivariant and that it is \emph{microflexible}. By a theorem of Gromov \cite{Gromov}, this automatically implies that for connected non-compact manifolds $M$ a certain map
\[\Psi(M)\lra \Gamma(\Psi^\fib(TM)\to M)\]
is a homotopy equivalence. The space on the right is the space of sections of the fibrewise space of submanifolds of the tangent bundle of $M$. By the Galatius--Randal-Williams theorem, the fibre over each point is homotopy equivalent to the Thom space $\Th\gamma_{d,n}^\perp$. 

The sheaves $\Pr(-)$ and $\Pg(-;X)$ are in fact $\Diff(M)$-equivariant, and if they were also microflexible, then one could deduce that certain maps
\begin{align*}
\Pr(M)&\lra \Gamma(\Pr^\fib(TM)\to M) \\
\Pg(M)&\lra \Gamma(\Pg^\fib(TM;X)\to M)
\end{align*}
are homotopy equivalences. But this is a castle in the sky:

\begin{proposition} The sheaves $\Pr(-)$ and $\Pg(-;X)$ are not microflexible.
\end{proposition}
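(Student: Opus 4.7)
My plan is to build an explicit counterexample exploiting a rigidity in the topology of $\Pr$ that is absent in $\P$. The key observation is that if $W'$ lies in a sufficiently small $(K,\epsilon)^{\Pr}$-neighbourhood of a connected submanifold $W$, then $W' \cap K = \exp_W(Q) \cap K$ with $Q \to W$ a covering map of sheet count $k$ constant on $W$. To force a failure of microflexibility, I would design a path $s_t$ near a disconnected $K'$ that wants two different local sheet counts over the two components of $K'$, so that any partial lift to a connected $K \supset K'$ would require incompatible sheet counts on the ambient connected submanifold.

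Concretely, in $\Pr_1(\bR^2)$, put $p_1 = (-1,0)$ and $p_2 = (1,0)$, fix a small $\delta > 0$, and take the compact pair $K' = \overline{B}(p_1,\delta) \sqcup \overline{B}(p_2,\delta) \subset K = \overline{B}(0,2)$ with open neighbourhoods $U_{K'} = B(p_1, 2\delta) \sqcup B(p_2, 2\delta)$ and $U_K = B(0,3)$ (connected). Let $f \in \Pr_1(U_K)$ be the $x$-axis intersected with $U_K$, which is connected. Define $s \colon [0,\delta] \to \Pr_1(U_{K'})$ by
\[
s_t|_{B(p_1,2\delta)} = (\{y = t\} \cup \{y = -t\}) \cap B(p_1, 2\delta), \qquad s_t|_{B(p_2,2\delta)} = \{y = 0\} \cap B(p_2, 2\delta).
\]
Then $s_0 = f|_{U_{K'}}$, and continuity of $s$ at $t = 0$ is straightforward: on the $p_1$-side the local cover $\{(x, t), (x, -t)\} \subset Nf$ has sections of $C^1$-norm $t$, and on the $p_2$-side the path is constant.

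Suppose for contradiction that a partial lift $\widetilde H \colon [0, \epsilon') \to \Pr_1(U_K)$ with $\widetilde H(0) = f$ and $\widetilde H(t)|_{U_{K'}} = s_t$ existed. Continuity at $t = 0$ would place $\widetilde H(t)$ inside $(K, \epsilon)^{\Pr}$ for arbitrarily small $\epsilon$, hence $\widetilde H(t) \cap K = \exp_f(Q_t) \cap K$ for a cover $Q_t \to f$ of some sheet count $k$. Connectedness of $f$ makes $k$ independent of the basepoint; but near $p_1$ the lift has two branches, forcing $k = 2$, and near $p_2$ it is a single line, forcing $k = 1$.

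The step I expect to require the most care is this last implication, since a priori two distinct sheets of $Q_t$ could project to the same curve in $\bR^2$. The resolution is that, once $\epsilon$ is small enough that $\exp_f$ is injective on the relevant neighbourhood of the zero section of $Nf$, distinct sheets of $Q_t$ necessarily give distinct branches of $\exp_f(Q_t)$, so the number of branches of $\widetilde H(t)$ near $p_i$ equals the sheet count of $Q_t$ there. This yields the contradiction $1 = k = 2$. The same construction extends to any $d < n$ by replacing the $x$-axis with any connected $d$-submanifold meeting two disjoint components of $K'$.
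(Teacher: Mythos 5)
Your proof is correct, and it exploits the same underlying rigidity as the paper --- that a $\widetilde\Psi$-neighbourhood of $W$ forces nearby submanifolds to be images of \emph{covers} $Q\subset NW\to W$, whose sheet count is therefore locally constant --- but the way you derive the contradiction is genuinely different and simpler. The paper takes a compact connected $W'$ (concretely $S^2$) and an open codimension-$0$ piece $W\subset W'$ (an equatorial band); the deformation presents a nontrivial $k$-sheeted cover of $W$, and microflexibility would force this cover to extend over $W'$, contradicting simple connectivity of $S^2$. You instead work with a single connected submanifold ($f$ = a line in $\bR^2$) and a \emph{disconnected} inner open set $U_{K'}$, and you force incompatible sheet counts ($2$ near $p_1$, $1$ near $p_2$) over the two components; the partial lift to the connected ambient region then has no consistent global sheet count. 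Your obstruction is purely combinatorial (constancy of degree of a finite cover over a connected base) rather than a global covering-space-theoretic fact, and it avoids the paper's auxiliary data ($W''\subset W\subset W'$, the parametrizing space $P$, the map to $\mathrm{Cov}$); each approach gains something --- yours is more elementary and self-contained, while the paper's phrasing in terms of $\mathrm{Cov}(W')\to\mathrm{Cov}(W)$ makes the obstruction visible as a lifting problem. One small point worth spelling out in your write-up: the conclusion that $Q_t$ has exactly $2$ sheets near $p_1$ and exactly $1$ near $p_2$ (rather than merely at least/at most) relies on condition (2) in the definition of $(K,\epsilon)^{\widetilde\Psi}$ bounding \emph{every} local section of the cover, so that for $\epsilon$ small all sheets over the segment from $p_1$ to $p_2$ land inside $K$ and are therefore accounted for by the visible branches of $\widetilde H(t)\cap K$; with a tubular choice of $K$ such as $[-2,2]\times[-\eta,\eta]$ this is automatic and uniform.
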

Let us recall first the definition of microflexibility.
\begin{df} A sheaf $\Phi$ on a manifold $M$ is microflexible if for each pair $C'\subset C$ of compact subspaces of $M$, and each pair $C'\subset U',C\subset U$ of open subsets of $M$ such that $U'\subset U$, and for each diagram
\[\xymatrix{
P\times \{0\}\ar[r]^f\ar[d] & \Phi(U)\ar[d]^r \\
P\times [0,1]\ar[r]^h & \Phi(U')}\]
there exists an $\epsilon>0$ and a pair of open subsets $C'\subset V'\subset U'$ and $C\subset V\subset U$ such that $V'\subset V$, and a dashed arrow
\[\xymatrix{
P\times \{0\}\ar[r] \ar[d]& \Phi(U)\ar[r] & \Phi(V) \ar[d]  \\
P\times [0,\epsilon)\ar[r]\ar@{-->}[urr] & \Phi(U')\ar[r] & \Phi(V')
}\]
\end{df}
\begin{proof}
As for the main theorem, we only give the proof for the sheaf $\Pr(-)$, the proof for $\Pg(-;X)$ being exactly the same. 

Let $W'$ be a connected compact submanifold of $\bR^n$, and let $C'=U'$ be a tubular neighbourhood of $W'$ (which we implicitely identify with $NW$ from now on).

Let $W''\subset W\subset W'$ be codimension $0$ submanifolds such that $W''$ is closed as a subset and $W$ is open, and the first inclusion is a homotopy equivalences.

Let $U$ and $C$ be the restrictions of $U'$ to $W$ and $W''$ respectively. 

Let $C_k(NW)$ be the fibrewise configuration space of $k$ unordered points in the normal bundle of $W$. A section $f$ of this bundle defines 
\begin{enumerate}
\item a $k$-sheeted covering of $W$ and 
\item an element in $\Pr(NW)\subset \Pr(U)$.
\end{enumerate}
Since the fibre of $NW$ is a vector space, we can multiply any subset of it by a real number. Then we can define a path
\[[0,1]\lra \Pr(NW)\]
by sending $t>0$ to $t\cdot f(W)$ and $t=0$ to $W$. If we have a family of sections of $C_k(NW)$ indexed by $P$, we obtain a map
\[g\colon P\times [0,1]\lra \Pr(NW)\]
whose restriction to $P\times \{0\}$ is constant. Suppose now that we have a microflexible solution for the diagram
\[\xymatrix{
P\times \{0\}\ar[r]^c\ar[d] & \Pr(U')\ar[d]^r \\
P\times [0,1]\ar[r]^g & \Pr(U)}\]
where $c$ is the constant map with value $W'$. This means that we can find an $\epsilon>0$ and an open subset $C\subset V\subset U$ 
such that
\[\xymatrix{
P\times \{0\}\ar[r]^c \ar[d]& \Pr(U)\ar[r] & \Pr(V) \ar[d]  \\
P\times [0,\epsilon)\ar[r]^g\ar[urr]^h & \Pr(U')\ar[r] & \Pr(U').
}\]
Then, for small values of $\delta\in [0,\epsilon)$, the map $h$ takes values in the space of sections of $C_k(NW)$ (the $k$ is determined because $h$ is extending the section $g$ that takes values in $C_k(NW)$, and because the inclusion $W\subset W'$ induces an epimorphism in components), and therefore defines for each $p\in P$ a $k$-sheeted covering of $W$. As a consequence, the above solution gives also a lift to the following diagram (where $\mathrm{Cov}(W)$ denotes the space of finite sheeted coverings of $W$):
\[\xymatrix{
 & \mathrm{Cov}(W') \ar[d]  \\
P\ar[r]^g\ar[ur]^h & \mathrm{Cov}(W).
}\]
But this would mean that any family of coverings of $W$ can be extended to a family of coverings of $W'$ which is false (for instance, if $W' = S^2$ and $W$ is a equatorial band in $S^2$).
\end{proof}

\bibliographystyle{amsalpha}
\bibliography{biblio-article}

\end{document}